\def\textmatrix#1&#2\\#3&#4\\{\bigl({#1 \atop #3}\ {#2 \atop #4}\bigr)}
\def\dispmatrix#1&#2\\#3&#4\\{\left({#1 \atop #3}\ {#2 \atop #4}\right)}
\newcommand{\beg}{\begin{equation}}
	\newcommand{\eeg}{\end{equation}}
\newcommand{\ben}{\begin{eqnarray*}}
	\newcommand{\een}{\end{eqnarray*}}
\newtheorem{thm}{Theorem}[section]
\numberwithin{equation}{section} \theoremstyle{definition}
\newcommand{\Dbar}{\overline{\mathbb{D}}}
\newcommand{\C}{\mathbb{C}}
\newcommand{\D}{\mathbb{D}}
\newcommand{\T}{\mathbb{T}}
\newcommand{\Gg}{\mathbb{G}_n}
\newcommand{\E}{\mathbb E}
\newcommand{\Ebar}{\overline{\mathbb E}}
\newcommand{\Pe}{\mathbb P}
\newcommand{\Pbar}{\overline{\mathbb P}}
\newcommand{\G}{\mathbb{G}_2}
\newcommand{\w}{\omega}
\def\textmatrix#1&#2\\#3&#4\\{\bigl({#1 \atop #3}\ {#2 \atop #4}\bigr)}
\def\dispmatrix#1&#2\\#3&#4\\{\left({#1 \atop #3}\ {#2 \atop #4}\right)}
\title[Distinguished boundaries of $\mathbb{G}_2, \mathbb{E}$ and $\mathbb{P}$]{A new characterization for the distinguished boundaries of a few domains related to $\mu$-synthesis}
\author[Tomar]{NITIN TOMAR}
\address[Nitin Tomar]{Mathematics Department, Indian Institute of Technology Bombay, Powai, Mumbai-400076, India.} \email{tnitin@math.iitb.ac.in}		
\keywords{Automorphisms, distinguished boundary, symmetrized polydisc, tetrablock, pentablock}	
\subjclass[2010]{32A07, 32A10}	
\thanks{The author is supported by the Prime Minister Research Fellowship (PMRF ID 1300140), Government of India.}	
\begin{document}

	\maketitle
	
\begin{abstract}
In this note, we provide an alternative way of describing the  distinguished boundaries of symmetrized bidisc, tetrablock and pentablock. We show that each of these distinguished boundaries can be characterized as the union of orbits of two elements under their automorphism groups.
\end{abstract}

	\section{Introduction}
	
	\noindent For a bounded domain $\Omega \subset \C^n$, let $A(\Omega)$ be the algebra of continuous functions on its closure $\overline{\Omega}$ that are holomorphic in $\Omega$. A \textit{boundary} for $\Omega$ is a subset on which every function in $A(\Omega)$ attains its maximum modulus. The \textit{distinguished boundary} of $\Omega$ denoted by $b\overline{\Omega}$ is the smallest closed boundary of $\Omega$. If $\Omega$ is the unit disc $\D$ in the complex plane $\C$, then it follows from the maximum principle that $b\overline{\D}=\mathbb{T}$, the unit circle. An \textit{automorphism} of $\Omega$ is a bijective holomorphic self-map on $\Omega$ with holomorphic inverse. We denote the group of automorphisms of $\Omega$ by $Aut(\Omega)$. It is well-known that any automorpshim of $\D$ is of the form $\w B_\alpha$ for some $\omega \in \T$, where
	\[
	B_\alpha(z)= \frac{z-\alpha}{\overline{\alpha}z-1} \quad (z \in \D)
	\]
	for some $ \alpha \in \D$. For   any $\xi \in \mathbb{T}$, the map $v(z)=\xi z$ is in $Aut(\D)$ such that $v(1)=\xi$. Conversely, $v(1) \in \mathbb{T}$ for any $v \in Aut(\D)$. Thus, $\T=\{v(1) : v \in Aut(\D)\}$ and so $\T$ is the orbit of $z=1$ under the group $Aut(\D)$. Motivated by this, we describe the distinguished boundary of several other domains via their automorphisms. The domains that we consider here are the symmetrized polydisc $\Gg$ (see \cite{AglerII, Edigarian}), the tetrablock $\E$ (see \cite{Abouhajar}) and the pentablock $\Pe$ (see \cite{AglerIV}), where
\begin{equation*}
\begin{split}
	&\Gg:=\left\{\left(\underset{1 \leq i \leq n}{\sum}z_i, \underset{1 \leq i<j \leq n}{\sum}z_iz_j, \dotsc, \underset{1 \leq i \leq n}{\prod}z_i\right) \in \C^n : z_1, \dotsc, z_n \in \D\right\}  \quad (n \geq 2), \\	
	& \mathbb{E}:=\{(a_{11}, a_{22}, det(A)) \in \C^3 : A=[a_{ij}]_{2 \times 2}, \ \|A\|<1\},\\
	& 
\mathbb{P}:=\{(a_{21}, tr(A), det(A)) \in \C^3 : A=[a_{ij}]_{2 \times 2}, \ \|A\|<1\}. 
\end{split}
\end{equation*}

These domains arise naturally while considering special cases of the $\mu$-synthesis problem in control engineering, e.g. see \cite{Doyle}. The \textit{structured singular value} $\mu_E$ relative to a linear subspace $E$ of $M_n(\C)$, the space of $n\times n$ matrices, is given by 
	\[
		\mu_E(A):=\big(\inf\{\|X\| \ : \ X \in E, \ \text{det}(I-AX)=0 \}\big)^{-1}, \qquad A \in M_n(\mathbb{C}).
		\]
If $E=M_n(\C)$, then $\mu_E(A)=\|A\|$ and if $E$ is the space of all scalar multiples of identity, then $\mu_E(A)$ is the spectral radius $r(A)$. For any $A \in M_n(\C), r(A)<1$ if and only if the symmetrization of its eigenvalues belongs to $\Gg$ (see \cite{Edigarian}, Section 5) . If $E$ is the space of $2 \times 2$ diagonal matrices, then for any $A=[a_{ij}] \in M_2(\C), \mu_E(A)<1$ if and only if $(a_{11}, a_{22}, det(A)) \in \E$ (see \cite{Abouhajar}, Section 9).  Another natural choice of $E$ is the linear span of the identity matrix and $\begin{bmatrix}
0 & 1 \\
0 & 0
\end{bmatrix}$. In this case, for any $A=[a_{ij}] \in M_2(\C), \mu_E(A)<1$ if and only if $(a_{21}, tr(A), det(A)) \in \Pe$ (see \cite{AglerIV}, Section 3). These domains achieved considerable attentions in the recent years due to their geometric, function theoretic and operator theoretic aspects. Let the closure of $\mathbb{G}_n, \mathbb{E}$ and $\mathbb{P}$ be denoted by $\Gamma_n, \Ebar$ and $\Pbar$ respectively. The distinguished boundaries of $\G, \E$ and $\Pe$ were determined in \cite{AglerII}, \cite{Abouhajar} and \cite{AglerIV} to be 
\begin{equation*}
\begin{split}
& b\Gamma_2=\{(s, p) \in \C^2 : |s| \leq 2, s=\overline{s}p, |p|=1\} 
=\{(z_1+z_2, z_1z_2) \in \C^2 : |z_1|=|z_2|=1\}, \\
& b\Ebar=\{(x_1, x_2, x_3) \in \C^3 : |x_1| \leq 1, x_1=\overline{x}_2x_3, |x_3|=1\} \quad 
\text{and} \\
&b\Pbar=\{(a, s, p) \in \C^3 : |a|^2+|s|^2\slash 4=1, (s, p) \in b\Gamma_2\},\\
\end{split}
\end{equation*}
respectively. This short note provides a new description of the distinguished boundaries of $\G, \E$ and $\Pe$ as the orbits of certain elements under their respective automorphism groups.
\section{Main results}

\noindent To begin with, we briefly recollect the description of the automorphism groups of $\G, \E$ and $\Pe$ from the literature. The automorphisms of $\G$ are determined by the elements in $Aut(\D)$. Note that for $v \in Aut(\D)$, the map 
\[
\tau_v(z_1+z_2, z_1z_2)=(v(z_1)+v(z_2), v(z_1)v(z_2)) \quad (z_1, z_2 \in \Dbar)
\] 
is in $Aut(\G)$. Indeed, every automorphism of $\G$ is of this kind as the following theorem explains.
\begin{thm}[Jarnicki and Pflug, \cite{Jarnicki}]
$Aut(\G)=\{\tau_v : v \in Aut(\D)\}$.
\end{thm}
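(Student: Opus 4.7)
The plan is to prove the two inclusions separately. The inclusion $\{\tau_v : v \in Aut(\D)\} \subseteq Aut(\G)$ is essentially automatic: for $v \in Aut(\D)$, the map $v \times v : \D^2 \to \D^2$ is an automorphism that commutes with the coordinate swap $(z_1, z_2) \mapsto (z_2, z_1)$, so it descends through the symmetrization map $\pi(z_1, z_2) = (z_1 + z_2,\, z_1 z_2)$ to a well-defined holomorphic self-map $\tau_v$ of $\G$, with holomorphic inverse $\tau_{v^{-1}}$.

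For the reverse inclusion, I would take $\Phi \in Aut(\G)$ and aim to exhibit it as $\tau_v$ for some $v \in Aut(\D)$. The strategy is to single out an intrinsic subvariety of $\G$ that is automorphism-invariant and biholomorphic to $\D$, and to read off $v$ from the restriction of $\Phi$ to it. The natural candidate is the \emph{royal variety}
\[
R = \{(2z, z^2) : z \in \D\} = \{(s, p) \in \G : s^2 = 4p\},
\]
which is the image under $\pi$ of the diagonal of $\D^2$ and coincides with the branch locus of $\pi$. Establishing $\Phi(R) = R$ is the core step; one way is to characterize $R$ intrinsically, for example as the unique nontrivial complex geodesic of $\G$ that is simultaneously a holomorphic retract, equivalently the locus where the Carath\'eodory and Kobayashi pseudodistances degenerate in a distinguished way. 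Any such description is automatically preserved by biholomorphisms.

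Granted $\Phi(R) = R$, the restriction $\Phi|_R$, read through the identification $(2z, z^2) \leftrightarrow z$, produces an element $v \in Aut(\D)$. Replacing $\Phi$ by $\Psi := \tau_v^{-1} \circ \Phi$, one is reduced to proving that an automorphism of $\G$ fixing every point of $R$ is the identity. For this, I would lift $\Psi$ through the two-sheeted branched cover $\pi$: since $\Psi$ pointwise fixes the branch locus, a standard monodromy argument on $\G \setminus R$ yields a lift $\widetilde{\Psi} \in Aut(\D^2)$ that commutes with the coordinate swap. The classification of $Aut(\D^2)$ (products of M\"obius transformations, optionally composed with the swap), combined with $\widetilde{\Psi}$ being the identity on the diagonal, forces $\widetilde{\Psi} = \mathrm{id}_{\D^2}$, whence $\Psi = \mathrm{id}_\G$ and $\Phi = \tau_v$.

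The principal obstacle is the invariance $\Phi(R) = R$: it requires an intrinsic, biholomorphism-invariant description of the royal variety. Once this is in hand, the passage from $Aut(\D^2)$ to $Aut(\G)$ is a routine combination of the classification of automorphisms of the bidisc with the rigidity coming from fixing a one-dimensional holomorphic retract transversally.
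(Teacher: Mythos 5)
The paper itself offers no proof of this theorem --- it is quoted from Jarnicki--Pflug --- so your proposal has to stand on its own. Its architecture is a legitimate route (essentially the royal-variety approach of Agler--Young rather than anything in this paper): the inclusion $\{\tau_v\} \subseteq Aut(\G)$ is correct as you describe it, and the endgame is also sound. Indeed, once $\Phi(R)=R$ is known, applying the same to $\Phi^{-1}$ shows $\Phi|_R$ is an automorphism of $R\cong\D$, giving $v$; then $\Psi=\tau_v^{-1}\circ\Phi$ fixes $R$ pointwise, the lift through the two-sheeted cover $\D^2\setminus\Delta\to\G\setminus R$ exists because $\Psi_*$ is an automorphism of $\pi_1(\G\setminus R)\cong\mathbb{Z}$ and hence preserves the index-two subgroup $2\mathbb{Z}$, the lift extends across $\Delta$ by boundedness, and since $\widetilde{\Psi}$ fixes the diagonal pointwise the classification of $Aut(\D^2)$ forces $\widetilde{\Psi}$ to be the identity or the swap, so $\Psi=\mathrm{id}$.

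The genuine gap is exactly where you flag it, and neither mechanism you propose for proving $\Phi(R)=R$ survives scrutiny. The Carath\'eodory and Kobayashi distances coincide \emph{everywhere} on $\G$ (Agler--Young \cite{AglerIII}), so there is no locus where they ``degenerate.'' For the same reason, \emph{every} complex geodesic of $\G$ is a holomorphic retract: $\G$ is taut, so a Carath\'eodory extremal function post-composed with a disc automorphism is a left inverse of any Kobayashi extremal disc; in particular the flat geodesics $\{(\beta+\overline{\beta}p,\,p):p\in\D\}$, $\beta\in\D$, are retracts, and uniqueness fails. A correct intrinsic description of $R$ does exist --- for instance, $R$ is precisely the set of $(s,p)\in\G$ at which the magic functions $\Phi_\omega(s,p)=(2\omega p-s)/(2-\omega s)$ take a value independent of $\omega\in\T$ (a short computation shows this is equivalent to $s^2=4p$), and Agler--Young \cite{AglerV} prove the invariance of $R$ along these lines --- but establishing that the family $\{\Phi_\omega\}$ is itself biholomorphically invariant is the real content, and your plan leaves it unproved. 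Alternatively, you could bypass $R$ entirely via the Edigarian--Zwonek theorem quoted in this paper immediately after the statement: an automorphism of $\G$ is a proper holomorphic self-map, hence of the form $\tau_B$ for a finite Blaschke product $B$, and injectivity forces $\deg B=1$, i.e.\ $B\in Aut(\D)$.
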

The group $Aut(\mathbb{E})$ is quite large. One trivial autmorphism of $\E$ is the flip map $F: \E \to \E$ given by $F(x_1, x_2, x_3)=(x_2, x_3, x_1)$. To define others, we  fix $v=-\xi_1B_{z_1}$ and $\chi=-\xi_2B_{-\overline{z}_2}$ for $\xi_1, \xi_2 \in \T$ and $z_1, z_2 \in \D$. The map
		\[
		T_{v, \chi}(x_1, x_2, x_3)=\bigg(T_1(x_1, x_2, x_3), \ T_2(x_1, x_2, x_3), \ T_3(x_1, x_2, x_3)\bigg) \quad ((x_1, x_2, x_3) \in \overline{\E} ),
		\]	
		where
		\begin{equation}\label{eqn2.1}
			\begin{split}
				& T_1(x_1, x_2, x_3)=\xi_1\frac{(x_1-z_1)+\xi_2\overline{z_2}(z_1x_2-x_3)}{(1-\overline{z}_1x_1)-\xi_2\overline{z}_2(x_2-\overline{z}_1x_3)},
				\quad  T_2(x_1, x_2, x_3)=\frac{z_2(\overline{z}_1x_1-1)+\xi_2(x_2-\overline{z}_1x_3)}{(1-\overline{z}_1x_1)-\xi_2\overline{z}_2(x_2-\overline{z}_1x_3)},\\
				\\
				& T_3(x_1, x_2, x_3)=\xi_1\frac{z_2(z_1-x_1)-\xi_2(z_1x_2-x_3)}{(1-\overline{z_1}x_1)-\xi_2\overline{z}_2(x_2-\overline{z}_1x_3)}		
			\end{split}
		\end{equation}
is in $Aut(\E)$. The automorphisms of $\E$ were first described in \cite{Abouhajar}. An interested reader may also refer to Section 6 of \cite{KosinskiII}. Finally, $Aut(\E)$ was completely described in \cite{Young} in the following way.
\begin{thm}[\cite{Young}, Theorem 4.1]
$Aut(\E)=\{T_{v, \chi}, T_{v, \chi}\circ F : v, \chi \in Aut(\D)  \}$.
\end{thm}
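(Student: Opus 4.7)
The plan is to establish both inclusions. For the forward direction, I would verify that every $T_{v,\chi}$ and $T_{v,\chi} \circ F$ belongs to $Aut(\E)$. The cleanest route uses the matrix realization $(x_1, x_2, x_3) = (a_{11}, a_{22}, \det A)$ with $\|A\|<1$: the map $T_{v,\chi}$ should arise as the trace on $\E$ of a transformation of the Schur class of $2\times 2$ matrices, induced by applying the M\"obius maps associated with $v$ and $\chi$ to the two relevant scalar data encoded in the triple. The flip $F$ preserves $\E$ by a straightforward permutation symmetry of the defining data. Closure of the displayed set under composition and inversion is then a formal check reflecting how the underlying pair in $Aut(\D)\times Aut(\D)$ composes; one should recover explicit formulas for the parameters $(v', \chi')$ of a product $T_{v_1,\chi_1}\circ T_{v_2,\chi_2}$.

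For the reverse inclusion, I would use a reduction-to-a-fixed-point strategy followed by a rigidity argument. Let $G = \{T_{v,\chi} : v,\chi \in Aut(\D)\}$. A direct computation starting from $(0,0,0)$ shows that $G$ acts transitively on the royal variety $\mathcal{R}_{\E} := \{(x_1, x_2, x_3) \in \E : x_1 x_2 = x_3\}$. The key preliminary lemma is that any $\Phi \in Aut(\E)$ must map $\mathcal{R}_{\E}$ into itself, for instance because $\mathcal{R}_{\E}$ is the locus of degeneracy of a distinguished family of complex geodesics of $\E$, and such families are automorphism-invariant. Granted this, after composing $\Phi$ with a suitable element of $G$ we may assume $\Phi$ fixes the origin $(0,0,0)$.

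At that stage, Cartan's uniqueness theorem reduces the problem to classifying the linear self-maps of $\C^3$ that preserve $\overline{\E}$. Using a Schur-type characterization of $\E$ expressed as a rational inequality in $(x_1, x_2, x_3)$, one argues that any such linear map must have the form $(x_1, x_2, x_3)\mapsto (\xi_1 x_1, \xi_2 x_2, \xi_1\xi_2 x_3)$ for some $\xi_1, \xi_2 \in \T$, possibly post-composed with the flip $F$. Each such linear map is then recognised as the specialization of $T_{v,\chi}$ (respectively $T_{v,\chi}\circ F$) at $z_1 = z_2 = 0$, closing the argument. The main obstacle, in my view, is establishing the preservation of the royal variety by an arbitrary automorphism, since this sits at the heart of why $Aut(\E)$ takes its particular form; the linear classification at the origin, while delicate, is then largely a matter of unpacking the defining inequality under an invertible linear change of coordinates.
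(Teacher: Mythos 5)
This theorem is not proved in the paper at all: it is quoted verbatim from N.~J.~Young, \emph{The automorphism group of the tetrablock} (Theorem 4.1 there), so there is no internal argument to compare yours against. Judged on its own terms, your outline has the right overall shape --- verify that $T_{v,\chi}$ and the flip are automorphisms, use transitivity of $\{T_{v,\chi}\}$ on the triangular set to reduce to an origin-fixing automorphism, then classify those --- and this is indeed the general template used for such $\mu$-synthesis domains. But two of your steps, as stated, are genuinely incomplete. First, the reduction via ``Cartan's uniqueness theorem'' to \emph{linear} self-maps does not go through directly: $\E$ is not a circular domain in the classical sense (the condition $x\in\E$ iff $|x_2-\overline{x}_1x_3|+|x_1x_2-x_3|<1-|x_1|^2$ is not invariant under $x\mapsto e^{i\theta}x$); it is only $(1,1,2)$-quasi-circular, corresponding to $A\mapsto e^{i\theta}A$ on the matrix ball. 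The appropriate generalization (Kaup's theorem for quasi-circular domains) only forces an origin-fixing automorphism to have the form $(x_1,x_2,x_3)\mapsto(L(x_1,x_2),\,cx_3+q(x_1,x_2))$ with $L$ linear and $q$ quadratic, and eliminating the quadratic term in the third coordinate is a separate, nontrivial step that your sketch omits.

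Second, the invariance of the triangular set $\{x_1x_2=x_3\}$ under an arbitrary automorphism --- which you correctly identify as the heart of the matter --- is asserted with only a one-clause justification (``locus of degeneracy of a distinguished family of complex geodesics''). That is not a proof; in the literature this step is carried out either through the Agler--Young ``magic functions'' machinery (reference [5] of this paper, written largely to supply this tool) or through a careful analysis of the complex geodesics and the Lempert/Carath\'eodory data of $\E$, and it occupies most of Young's argument. Until that lemma is actually established, the reduction to an origin-fixing automorphism is unavailable. A small additional caution: the classification of the linear automorphisms as $(\xi_1x_1,\xi_2x_2,\xi_1\xi_2x_3)$, possibly composed with the flip, is correct only for the flip $(x_1,x_2,x_3)\mapsto(x_2,x_1,x_3)$; note that the formula $F(x_1,x_2,x_3)=(x_2,x_3,x_1)$ printed in this paper is a typo (a cyclic permutation does not preserve $\E$), so make sure your linear classification is phrased with the correct involution.
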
  	
Now, we move to consider the automorphisms of the pentablock. For $\omega \in \T$ and $v=\eta B_\alpha \in Aut(\D)$, let 
\[
f_{\omega v}(a, s, p)=\bigg(\frac{\omega \eta (1-|\alpha|^2)a}{1-\overline{\alpha}s+\overline{\alpha}^2p}, \tau_{v}(s, p) \bigg) \quad ( (a, s, p) \in \Pbar).
\]
It was proved in \cite{AglerIV} that $\{f_{\omega v}: \omega \in \T, v \in Aut(\D)\}$ is a subgroup of $Aut(\Pe)$ . Indeed, we have the following:
\begin{thm}[\cite{Kosinski}, Theorem 15]
$Aut(\Pe)=\{f_{\omega v} :  \omega \in \T, v \in Aut(\D) \}$.
\end{thm}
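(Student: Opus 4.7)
The inclusion $\{f_{\omega v} : \omega \in \T, v \in Aut(\D)\} \subseteq Aut(\Pe)$ is provided by \cite{AglerIV}, so my plan is to establish the reverse containment. The key structural tool I would exploit is the natural projection
\[
\pi : \Pe \to \G, \qquad \pi(a, s, p) = (s, p),
\]
whose fibers are analytic discs varying holomorphically over $\G$. If one can show that every $\Phi \in Aut(\Pe)$ descends through $\pi$ to an automorphism $\tilde{\Phi} \in Aut(\G)$, then Theorem 2.1 forces $\tilde{\Phi} = \tau_v$ for some $v \in Aut(\D)$, and the only remaining task is a fiberwise analysis.

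The descent step is where I would focus the main effort. My approach would be to characterize the fibers $\pi^{-1}(s, p)$ intrinsically within $\Pe$, so that any biholomorphism must permute them. Two natural candidates for such a characterization are: (i) that each fiber is a complex geodesic of $\Pe$ in a Lempert/Kobayashi sense, and that the fibers form a distinguished family of such geodesics (for instance, those realizing some extremal problem intrinsic to $\Pe$); or (ii) using the boundary structure, noting that $b\Pbar$ fibers over $b\Gamma_2$ via $\pi$ with circular fibers of radius determined by $(s, p)$, so that a continuous extension of $\Phi$ to $\Pbar$ would induce a homeomorphism of $b\Pbar$ respecting this circle fibration. Either route should produce $\tilde{\Phi} \in Aut(\G)$ with $\pi \circ \Phi = \tilde{\Phi} \circ \pi$.

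With $\tilde{\Phi} = \tau_v$ in hand, I would replace $\Phi$ by $\Psi := f_{1, v^{-1}} \circ \Phi$, which then satisfies $\pi \circ \Psi = \pi$, so that $\Psi(a, s, p) = (g(a, s, p), s, p)$ with $g(\,\cdot\,, s, p)$ a holomorphic automorphism of the fiber $\pi^{-1}(s, p)$. Evaluating at $(s, p) = (0, 0)$, where the fiber is the unit disc in the $a$-coordinate, and using the holomorphic dependence of $g$ on $(s, p)$ together with a Schwarz lemma argument applied fiberwise at a normalized point, one concludes that $g(a, s, p) = \omega \cdot a$ for a single constant $\omega \in \T$ independent of $(s, p)$. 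Matching this against the formula for $f_{\omega v}$ with $v = \mathrm{id}$ yields $\Psi = f_{\omega, \mathrm{id}}$, and hence $\Phi = f_{\omega v}$, as required.

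The heart of the argument, and the step I expect to be the main obstacle, is the descent. Proving that an arbitrary biholomorphism of $\Pe$ respects the projection $\pi$ is a rigidity statement that is not formal; it requires identifying a canonical structure on $\Pe$ (such as a distinguished family of complex geodesics, or the preimage of a special subvariety of $\G$) that is necessarily preserved by every element of $Aut(\Pe)$. This is where the detailed function theory of the pentablock would enter, and it is presumably the crux of the cited argument of Kosinski.
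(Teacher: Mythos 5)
First, a point of reference: the paper does not prove this statement at all --- it is quoted verbatim from Kosi\'nski's article (Theorem 15 of \cite{Kosinski}) --- so there is no in-paper argument to measure your proposal against; it has to stand on its own. As written, it does not: it is a plan whose decisive step is exactly the one you defer. The containment $\{f_{\omega v}\} \subseteq Aut(\Pe)$ is indeed in \cite{AglerIV}, and once one knows that every $\Phi \in Aut(\Pe)$ satisfies $\pi \circ \Phi = \tau_v \circ \pi$ for some $v$, the rest is bookkeeping. But the descent claim is essentially equivalent to the theorem itself, and neither of your two suggested routes survives scrutiny. For (i): the fibers $\pi^{-1}(s,p) \cap \Pe$ are discs $\{|a| < r(s,p)\}$ of radius strictly less than $1$ in general, and there is no reason (and I do not believe it is true) that the maps $\lambda \mapsto (r(s,p)\lambda, s, p)$ are complex geodesics of $\Pe$; even if they were, you would need an intrinsic property singling out precisely this family among all geodesics, which is not supplied. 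For (ii): an automorphism of a bounded domain need not extend continuously to the closure --- for $\Pe$ this can be established, but it is a theorem, not a formality --- and even granting the extension, a homeomorphism of $b\Pbar$ has no a priori obligation to respect the circle fibration over $b\Gamma_2$; the radius $\sqrt{1-|s|^2/4}$ of the fiber depends only on $|s|$, so the fibration is not even recoverable from the metric data you invoke. Your own closing paragraph concedes that this rigidity statement is ``the crux of the cited argument,'' which is an accurate self-assessment: the proof has not been given.

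The fiberwise endgame also has a gap you should not wave at. After reducing to $\Psi(a,s,p)=(g(a,s,p),s,p)$, each $g(\cdot,s,p)$ is an automorphism of the disc $\{|a|<r(s,p)\}$, hence a M\"obius map $a \mapsto e^{i\theta}r^2(a-b)/(r^2-\bar b a)$ with $\theta, b$ depending on $(s,p)$; you must show $b \equiv 0$ before any Schwarz-lemma normalization applies, and the usual ``holomorphic family of disc automorphisms is a constant rotation'' argument is complicated here by the fact that $r(s,p)$ is neither constant nor holomorphic. For the record, the proof in \cite{Kosinski} takes a different route: it exploits that $\Pe$ is a bounded $(1,1,2)$-quasi-balanced domain, so that a Cartan-type theorem forces any automorphism fixing the origin to be a polynomial map of a prescribed graded form, and the substantive work is to show that the $Aut(\Pe)$-orbit of the origin coincides with its orbit $\{(0,2\alpha,\alpha^2): \alpha \in \D\}$ under the known subgroup --- i.e., that $\Phi(0)$ can be moved back to $0$ by some $f_{\omega v}$. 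That reduction plays the role your descent step was meant to play, and it is where the hard function theory of the pentablock actually enters.
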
	
We now present the main results of this article. We first consider the symmetrized bidisc $\G$ as a special case.
\begin{thm}\label{thm1}
$b\Gamma_2=\{\tau_v(0,1), \tau_v(2,1) : v \in Aut(\D)\}$. Furthermore, 
\[
b\Gamma_2 \cap V(\G)=\{ \tau_v(2,1) : v \in Aut(\D)\} \quad \text{and} \quad b\Gamma_2 \setminus V(\G)=\{\tau_v(0,1) : v \in Aut(\D)\}
\]
where $V(\G)=\{(s, p) \in \Gamma_2 : s^2=4p\}$ is the royal variety in $\G$.
\end{thm}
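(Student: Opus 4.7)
The plan is to combine the concrete parametrization $b\Gamma_2 = \{(z_1+z_2, z_1 z_2): |z_1|=|z_2|=1\}$ with the description $Aut(\G) = \{\tau_v : v \in Aut(\D)\}$ from Theorem 2.1, and to organize everything around the dichotomy $z_1=z_2$ versus $z_1\neq z_2$. The two seed points $(0,1)$ and $(2,1)$ are natural representatives: $(0,1)=(i+(-i), i\cdot(-i))$ corresponds to distinct $z_1, z_2$, while $(2,1)=(1+1, 1\cdot 1)$ corresponds to a double point. Since every $\tau_v \in Aut(\G)$ is biholomorphic and therefore preserves the distinguished boundary setwise, both orbits are automatically contained in $b\Gamma_2$, giving the easy inclusion.

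For the reverse inclusion, I take $(s,p) \in b\Gamma_2$, write $(s,p) = (z_1+z_2, z_1 z_2)$ with $|z_1|=|z_2|=1$, and consider the two cases. If $z_1=z_2$, the choice $v(z)=z_1 z \in Aut(\D)$ satisfies $v(1)=z_1$, so $\tau_v(2,1)=(2z_1, z_1^2)=(s,p)$. If $z_1\neq z_2$, I need $v \in Aut(\D)$ with $v(i)=z_1$ and $v(-i)=z_2$, whence $\tau_v(0,1)=(z_1+z_2, z_1 z_2)=(s,p)$. This second case is the main technical point and the only step that is not bookkeeping: it amounts to $Aut(\D)$ acting transitively on ordered pairs of distinct points of $\T$. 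I expect to prove this by first moving the hyperbolic midpoint of the geodesic joining $z_1$ and $z_2$ to the origin via a Blaschke factor $B_\alpha$, so that $\{z_1,z_2\}$ becomes some antipodal pair $\{w,-w\}$, then composing with the rotation $z\mapsto (i/w)z$ to match $\{i,-i\}$, and finally inverting the resulting automorphism.

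For the refinement involving the royal variety, the key identity is $s^2 - 4p = (z_1-z_2)^2$, which shows that $(s,p)\in V(\G)$ if and only if $z_1=z_2$. Because $\tau_v$ sends a double point $(2z,z^2)$ to $(2v(z),v(z)^2)$, the variety $V(\G)$ is invariant under $Aut(\G)$; together with the observations $(2,1)\in V(\G)$ and $(0,1)\notin V(\G)$, the case analysis above matches the two orbits exactly to $b\Gamma_2 \cap V(\G)$ and $b\Gamma_2\setminus V(\G)$, with no further argument needed.
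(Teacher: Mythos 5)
Your proposal is correct and follows essentially the same route as the paper: the same two seed points $(0,1)$ and $(2,1)$, the same dichotomy $z_1=z_2$ versus $z_1\neq z_2$ on $\T$, and the same choice of $v\in Aut(\D)$ with $v(i)=z_1$, $v(-i)=z_2$ (respectively $v(1)=z$). The only difference is that you supply details the paper leaves implicit --- the transitivity of $Aut(\D)$ on ordered pairs of distinct boundary points via the geodesic/rotation construction, and the invariance of $V(\G)$ via $s^2-4p=(z_1-z_2)^2$ --- where the paper simply asserts the former and cites the literature for the latter.
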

\begin{proof}
The royal variety $V(\G)=\{(s, p) \in \Gamma_2 : s^2=4p\}$ was defined in \cite{AglerIII}. Evidently, $(0, 1) \in b\Gamma_2 \setminus V(\G)$ and $(2,1) \in b\Gamma_2 \cap V(\G)$. It was proved in \cite{AglerV} that  any automorphism of $\G$ leaves $V(\G)$ invariant. Thus, we have that  $\tau_v(0,1) \in b\Gamma_2 \setminus V(\G)$ and $\tau_v(2,1) \in b\Gamma_2 \cap V(\G)$. Let $(s, p) \in b\Gamma_2 \setminus V(\G)$. Then there exist $z_1, z_2 \in \T$ such that $z_1 \ne z_2$ and $(s, p)=(z_1+z_2, z_1z_2)$. Choose $v \in Aut(\D)$ such that $v(i)=z_1$ and $v(-i)=z_2$. So, $\tau_{v}(0, 1)=(v(i)+v(-i), v(i)v(-i))=(s,p)$. Consequently, we have
\[
 b\Gamma_2 \setminus V(\G)=\{\tau_v(0,1) : v \in Aut(\D)\}.
\]
Let $(s, p) \in b\Gamma_2 \cap V(\G)$. Then there there exists $z \in \T$ such that $(s, p)=(2z, z^2)$. Take $v \in Aut(\D)$ so that $v(1)=z$. Then $\tau_v(2,1)=(2v(1), v(1)^2)=(s, p)$. Thus, 
\[
b\Gamma_2 \cap V(\G)=\{ \tau_v(2,1) : v \in Aut(\D)\}.
\]
The proof is complete.
\end{proof}
It was shown in \cite{Edigarian} that the distinguished boundary $b\Gamma_n$ of $\Gg$ consists of the point $\pi_n(\T^n)$, where $\pi_n$ is the symmetrization map given by
\[
\pi_n(z_1, \dotsc, z_n)=\left(\underset{1 \leq i \leq n}{\sum}z_i, \underset{1 \leq i<j \leq n}{\sum}z_iz_j, \dotsc, \underset{1 \leq i \leq n}{\prod}z_i \right).
\]
Any proper holomorphic map $\tau_B: \Gg \to \Gg$ is of the form
\[
\tau_B(\pi_n(z_1, \dotsc, z_n))=\pi_n(B(z_1), \dotsc, B(z_n)) \quad (z_1, \dotsc, z_n \in \D)
\]
where $B$ is a finite Blaschke product. For a detailed proof of this result, one may refer Theorem 1 in \cite{Edigarian}. For $n \in \mathbb N$, let $\mu_1, \dotsc, \mu_n$ be distinct $n$-th roots of unity. Then each $B(\mu_j) \in \T$ for a fnite Blaschke product $B$. Thus, $\pi_n(B(\mu_1), \dotsc, B(\mu_n)) \in b\Gamma_n$. Moreover, Cantor and Phelps \cite{Cantor} showed that for $z_1, \dotsc, z_n \in \T$, there exists a finite Blaschke product $B$ such that $B(\mu_j)=z_j$ for $1 \leq j \leq n$. Then $\tau_B(\pi_n(\mu_1,\dotsc, \mu_n))=\pi_n(B(\mu_1), \dotsc, B(\mu_n))=\pi_n(z_1, \dotsc, z_n)$. Thus, the following result holds.
\begin{thm}: Let $n \geq 2$ and let $\mu_1, \dotsc, \mu_n$ be distinct $n$-th roots of unity. Then
\[
b\Gamma_n=\{\tau_B(\pi_n(\mu_1, \dotsc, \mu_n)) : \mbox{$B$ is a finite Blaschke product}\}.
\]
\end{thm}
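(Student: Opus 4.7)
The statement essentially combines three facts that the excerpt has already put in place, so my plan is to assemble them cleanly into a two-way containment argument, proving $b\Gamma_n \supseteq \{\tau_B(\pi_n(\mu_1,\dotsc,\mu_n))\}$ and $b\Gamma_n \subseteq \{\tau_B(\pi_n(\mu_1,\dotsc,\mu_n))\}$ separately, with $B$ ranging over finite Blaschke products.

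For the $\supseteq$ direction, I would start from the description $b\Gamma_n = \pi_n(\T^n)$ due to Edigarian, observe that any finite Blaschke product $B$ maps $\T$ into itself, and then use the lifting formula $\tau_B(\pi_n(z_1,\dotsc,z_n)) = \pi_n(B(z_1),\dotsc,B(z_n))$ evaluated at $(z_1,\dotsc,z_n) = (\mu_1,\dotsc,\mu_n) \in \T^n$. Since $(B(\mu_1),\dotsc,B(\mu_n)) \in \T^n$, its symmetrization lies in $\pi_n(\T^n) = b\Gamma_n$. This containment is essentially immediate.

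For the $\subseteq$ direction, I would start with an arbitrary $(s_1,\dotsc,s_n) \in b\Gamma_n$ and use the same Edigarian description to write $(s_1,\dotsc,s_n) = \pi_n(z_1,\dotsc,z_n)$ for some $z_1,\dotsc,z_n \in \T$. The key step is to invoke the Cantor–Phelps interpolation theorem to produce a finite Blaschke product $B$ satisfying $B(\mu_j) = z_j$ for $1 \le j \le n$; this is exactly what makes the distinct-root hypothesis on $\mu_1,\dotsc,\mu_n$ useful, as they give $n$ distinct prescribed interpolation nodes on $\T$. With $B$ in hand, I apply the lifting formula to get
\[
\tau_B(\pi_n(\mu_1,\dotsc,\mu_n)) = \pi_n(B(\mu_1),\dotsc,B(\mu_n)) = \pi_n(z_1,\dotsc,z_n) = (s_1,\dotsc,s_n),
\]
finishing the containment.

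The "main obstacle" is really just bookkeeping: checking that the theorem of Edigarian on proper holomorphic self-maps of $\mathbb{G}_n$ and the Cantor–Phelps interpolation result combine without any hidden hypothesis mismatch (e.g., the $n$-point interpolation theorem of Cantor and Phelps does require distinct nodes in $\T$, which is exactly why the statement assumes the $\mu_j$ are distinct $n$-th roots of unity rather than an arbitrary $n$-tuple). Since both of these tools are cited as black boxes in the paragraph preceding the theorem, the actual proof is little more than a two-line chain of equalities in each direction, and no new computation with the symmetrization map $\pi_n$ is needed beyond its defining formula.
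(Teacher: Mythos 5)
Your proposal is correct and follows essentially the same route as the paper: both directions rest on Edigarian's identification $b\Gamma_n=\pi_n(\T^n)$, the lifting formula $\tau_B(\pi_n(z_1,\dotsc,z_n))=\pi_n(B(z_1),\dotsc,B(z_n))$, and the Cantor--Phelps interpolation theorem to realize an arbitrary $n$-tuple of unimodular values at the distinct nodes $\mu_1,\dotsc,\mu_n$. No substantive difference from the paper's argument.
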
 

\begin{thm}\label{thm2}
	$b\Ebar=\{T_{v, \chi}(0, 0, 1), T_{v, \chi}(1,1,1) \ : \ v, \chi \in Aut(\D)\}$. Moreover, 
	\[
	b\Ebar \cap \Delta(\overline{\E})=\{T_{v, \chi}(1,1,1) \ : \ v, \chi \in Aut(\D) \}
	\quad
	\text{and} 
	\quad 
	b\Ebar \setminus \Delta(\overline{\E})=\{T_{v, \chi}(0,0,1) \ : \ v, \chi \in Aut(\D) \},
	\]	
	where $\Delta(\overline{\E})=\{(x_1, x_2, x_3) \in \Ebar : x_1x_2=x_3\}$, the collection of triangular points in $\Ebar$. 
\end{thm}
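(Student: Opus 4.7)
The strategy parallels the treatment of $\G$ in Theorem \ref{thm1}, but because $Aut(\E)$ is larger, the four parameters $\xi_1,\xi_2 \in \T$ and $z_1,z_2 \in \D$ in \eqref{eqn2.1} must be selected with care. My plan has four steps: (i) verify that $(0,0,1) \in b\Ebar \setminus \Delta(\Ebar)$ and $(1,1,1) \in b\Ebar \cap \Delta(\Ebar)$, which is immediate from the inequalities defining $b\Ebar$; (ii) show that each $T_{v,\chi}$ preserves $b\Ebar$ and, separately, $\Delta(\Ebar)$; (iii) given any point of $b\Ebar \setminus \Delta(\Ebar)$, produce parameters so that $T_{v,\chi}(0,0,1)$ equals that point; (iv) do the analogous construction at $(1,1,1)$ for $b\Ebar \cap \Delta(\Ebar)$.

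For (ii), invariance of $b\Ebar$ follows because $T_{v,\chi}$ extends continuously to a homeomorphism of $\Ebar$, so composition with it is an isometric automorphism of $A(\E)$. The nontrivial content is invariance of $\Delta(\Ebar)$; the trick is to substitute the triangular condition $x_3 = x_1 x_2$ into \eqref{eqn2.1} \emph{before} multiplying out. A short calculation then shows that each of the three numerators and the common denominator factor neatly into two of the binomials
\[
x_1 - z_1, \qquad 1 - \overline{z}_1 x_1, \qquad \xi_2 x_2 - z_2, \qquad 1 - \xi_2 \overline{z}_2 x_2,
\]
after which cancellation yields $T_1 T_2 = T_3$ at once. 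Bijectivity of $T_{v,\chi}$ promotes the inclusion $T_{v,\chi}(\Delta(\Ebar)) \subseteq \Delta(\Ebar)$ to equality, so both strata of $b\Ebar$ are preserved.

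For step (iii), note that any $(x_1, x_2, x_3) \in b\Ebar$ with $|x_2| = 1$ forces $x_1 x_2 = \overline{x}_2 x_2 x_3 = x_3$, so every point of $b\Ebar \setminus \Delta(\Ebar)$ has $|x_2| < 1$. For such a point I take $z_1 = 0$, $z_2 = -x_2 \in \D$, $\xi_1 = 1$, and $\xi_2 = x_3 \in \T$; then \eqref{eqn2.1} evaluated at $(0,0,1)$ collapses to numerators $\overline{x}_2 x_3, x_2, x_3$ over a common denominator of $1$, which equals $(x_1, x_2, x_3)$ in view of $x_1 = \overline{x}_2 x_3$. For step (iv), a point of $b\Ebar \cap \Delta(\Ebar)$ necessarily has $|x_1|=|x_2|=1$ and $x_3 = x_1 x_2$; choosing $z_1 = z_2 = 0$, $\xi_1 = x_1$, $\xi_2 = x_2$ makes \eqref{eqn2.1} collapse at $(1,1,1)$ to exactly this triple. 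The main obstacle throughout is step (ii): a brute-force expansion of $T_1 T_2 - T_3$ is quite unpleasant, whereas imposing the triangular condition first unlocks the symmetric four-factor structure displayed above and makes the identity drop out with no further algebra. Everything else is direct substitution using the defining relations $|x_3|=1$ and $x_1 = \overline{x}_2 x_3$.
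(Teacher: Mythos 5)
Your proof is correct and follows the same overall architecture as the paper's: verify the two base points, establish the forward inclusions by invariance, and prove surjectivity onto each stratum by an explicit choice of the parameters $\xi_1,\xi_2\in\T$, $z_1,z_2\in\D$. The one genuinely different ingredient is your step (ii). Where the paper evaluates \eqref{eqn2.1} at $(0,0,1)$ and rules out triangularity of the image pointwise (its first coordinate is unimodular only if $(1-|z_1|^2)(1-|z_2|^2)=1$), you prove once and for all that $T_{v,\chi}$ preserves $\Delta(\overline{\E})$. Your factorization checks out: substituting $x_3=x_1x_2$ into \eqref{eqn2.1} gives $T_1=\xi_1(x_1-z_1)/(1-\overline{z}_1x_1)$, $T_2=(\xi_2x_2-z_2)/(1-\xi_2\overline{z}_2x_2)$ and $T_3=T_1T_2$, which is a more structural (and reusable) fact than the paper's computation. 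One small caveat: ``bijectivity promotes the inclusion to equality'' is not a set-theoretic truism; you need that $T_{v,\chi}^{-1}$ is again of the form $T_{v',\chi'}$ (true, since these maps form a subgroup of $Aut(\E)$ by \cite{Young}), or else fall back on the paper's criterion that a point of $b\Ebar$ is triangular iff its first coordinate is unimodular. Your surjectivity parameters in step (iii) ($z_1=0$, $z_2=-x_2$, $\xi_1=1$, $\xi_2=x_3$) differ from the paper's ($z_1=-x_1\overline{x}_3$, $z_2=0$, $\xi_1=x_3$, $\xi_2=1$) but work equally well, and your preliminary observation that $|x_2|<1$ on $b\Ebar\setminus\Delta(\overline{\E})$ --- needed so that $z_2\in\D$ --- is exactly the fact from \cite{Abouhajar} that the paper also invokes; the step (iv) choice coincides with the paper's.
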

\begin{proof} 
It is easy to see that $(0, 0, 1) \in b\Ebar \setminus \Delta(\overline{\E})$.  By (\ref{eqn2.1}), we have 
	
		\begin{equation}\label{eqn6.2}
		\begin{split}
			T_{v, \chi}(0, 0, 1)=\bigg( -\xi_1\frac{z_1+\xi_2\overline{z}_2}{1+\xi_2\overline{z}_1\overline{z}_2}, \ \
			\frac{-(\overline{z}_1\xi_2+z_2)}{1+\xi_2\overline{z}_1\overline{z}_2}, \ \
			\xi_1\overline{\xi}_2\frac{1+z_1z_2\overline{\xi}_2}{1+\xi_2\overline{z}_1\overline{z}_2}
			\bigg),		
		\end{split}
	\end{equation}
	where 	$v=-\xi_1B_{z_1}, \chi=-\xi_2B_{-\overline{z}_2}$
	for some $\xi_1, \xi_2 \in \T$ and $z_1, z_2 \in \D$.
	It is not difficult to see that the third component of $T_{v, \chi}(0,0,1)$ is of unit modulus for any $v, \chi \in Aut(\D)$. Now, we show that $T_{v,\chi}(0, 0, 1)$ is not a triangular point. It was proved in \cite{Abouhajar} that if $(x_1, x_2 ,x_3) \in b\Ebar$, then either $x_1x_2 \ne x_3, |x_1|<1$ and $|x_3|=1$ or $x_1x_2=x_3$ and $x_1, x_2, x_3 \in \T$. So, if $T_{v,\chi}(0, 0, 1)$ is a triangular point, then  the first component of $T_{v, \chi}(0,0,1)$ must have unit modulus. We have that
	\begin{equation*}
		\begin{split}
			\bigg|\frac{z_1+\xi_2\overline{z}_2}{1+\xi_2\overline{z}_1\overline{z}_2}\bigg|=1 
			 \iff 	|z_1+\xi_2\overline{z}_2|^2=|1+\xi_2\overline{z}_1\overline{z}_2|^2
			 \iff (1-|z_1|^2)(1-|z_2|^2)=1,
		\end{split}
	\end{equation*}
	which is not possible as $z_1, z_2 \in \D$. Thus, $T_{v, \chi}(0, 0,1) \in b\Ebar \setminus \Delta(\overline{\E})$ for any $v, \chi \in Aut(\D)$. Conversely, let $x=(x_1, x_2, x_3) \in b\Ebar \setminus \Delta(\overline{\E})$. Choose $z_1=-x_1\overline{\xi}_1, z_2=0,\xi_1=x_3$ and $\xi_2=1$. Since $(x_1, x_2, x_3) \in b\Ebar \setminus \Delta(\Ebar)$, we have by the previous discussion that $|x_1|<1$ and so $z_1 \in \D$. Define $v=-\xi_1B_{z_1}$ and $\chi=-\xi_2B_{-\overline{z}_2}$, it follows from (\ref{eqn6.2}) that $
	T_{v, \chi}(0, 0, 1)=(x_1,  \overline{x}_1x_3, x_3)=(x_1, x_2, x_3)$.
	Thus,
	\[
	b\Ebar \setminus \Delta(\overline{\E})=\bigg\{T_{v, \chi}(0,0,1) \ : \ v, \chi \in Aut(\D) \bigg\}.
	\]
	Evidently, $(1, 1, 1) \in b\Ebar \cap \Delta(\overline{\E})$. Take $v=-\xi_1B_{z_1}, \chi=-\xi_2B_{-\overline{z}_2}$ 
	for some $\xi_1, \xi_2 \in \T$ and $z_1, z_2 \in \D$. By (\ref{eqn2.1}), we have
	\begin{equation}\label{eqn6.3}
		\begin{split}
			T_{v, \chi}(1, 1, 1)=\bigg( \xi_1\frac{1-z_1}{1-\overline{z}_1}, \ \
			\frac{\xi_2-z_2}{1-\xi_2\overline{z}_2}, \ \
			\xi_1\frac{(1-z_1)(\xi_2-z_2)}{(1-\overline{z}_1)(1-\xi_2\overline{z}_2)}
			\bigg).		
		\end{split}
	\end{equation}
	It is easy to see that $T_{v, \chi}(1,1,1) \in b\Ebar \cap \Delta(\Ebar)$ for any $v, \chi \in Aut(\D)$. To see the converse, let $x=(x_1, x_2, x_3) \in b\Ebar \cap \Delta(\overline{\E})$. Then $x_1, x_2$ and $x_3$ are in $\T$. Choose $
	z_1=0, z_2=0, \xi_1=x_1$ and $\xi_2=x_2$. For $v=-\xi_1B_{z_1}$ and $\chi=-\xi_2B_{-\overline{z}_2}$, we have by (\ref{eqn6.3}) that $T_{v, \chi}(1, 1, 1)=(x_1,  x_2, x_1x_2)=(x_1, x_2, x_3)$. Thus, 
	\[
	b\Ebar \cap \Delta(\overline{\E})=\bigg\{T_{v, \chi}(1,1,1) \ : \ v, \chi \in Aut(\D) \bigg\}.
	\]
The proof is complete.	
\end{proof}
Now, we descirbe the distinguished boundary of the pentablock $\Pe$. Recall that for $h \in A(\Pe)$ and $x \in \Pbar$ such that $h(x)=1$ and $|h(y)|<1$ for all $y \in \Pbar \setminus \{x\}$, we call $x$ a \textit{peak point} of $\Pbar$ and $h$ a \textit{peaking function} for $x$.
\begin{thm}\label{thm3}
$b\Pbar=\{f_{\omega v}(1, 0,1), f_{\omega v}(0, 2, 1) : \omega \in \T, v \in Aut(\D) \}$. Moreover, 
\[
b\Pbar \cap V(\Pe)=\{f_{\omega v}(0, 2, 1) : \omega \in \T, v \in Aut(\D) \}  \  \ \ \text{and} \ \ \ \ b\Pbar \setminus V(\Pe)=\{f_{\omega v}(1, 0,1) : \omega \in \T, v \in Aut(\D) \}
\]
where $V(\Pe)=\{(a, s, p) \in \Pbar : s^2=4p\}$.
\end{thm}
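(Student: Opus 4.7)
The plan is to mirror the structure of Theorems~\ref{thm1} and \ref{thm2}: exhibit one base point in $b\Pbar \cap V(\Pe)$ and one in $b\Pbar \setminus V(\Pe)$, show both strata are $Aut(\Pe)$-invariant so that the respective orbits land in the correct pieces, and then verify surjectivity within each stratum.

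First I would check the forward inclusions. The points $(1,0,1)$ and $(0,2,1)$ lie in $b\Pbar$ by the defining conditions $|a|^2+|s|^2/4=1$ and $(s,p) \in b\Gamma_2$, and evidently they lie on opposite sides of the hypersurface $s^2 = 4p$. To see that $f_{\omega v}$ respects this split, note that $V(\Pe)$ is cut out by a condition on the last two coordinates alone, and these transform exactly via $\tau_v$; since $\tau_v$ preserves $V(\G)$ (as used in the proof of Theorem~\ref{thm1} via \cite{AglerV}), $f_{\omega v}$ preserves $V(\Pe)$. Combined with the fact that automorphisms preserve the distinguished boundary, this gives both forward inclusions.

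For the reverse direction, I would split on whether the given $(a,s,p) \in b\Pbar$ satisfies $s^2 = 4p$. In the royal case, $(s,p)\in b\Gamma_2$ together with $s^2 = 4p$ forces $(s,p)=(2z,z^2)$ for some $z \in \T$, and the relation $|a|^2 + |s|^2/4 = 1$ then forces $a=0$. Picking $v \in Aut(\D)$ with $v(1)=z$ (as in Theorem~\ref{thm1}), the numerator of the first component of $f_{\omega v}(0,2,1)$ vanishes identically while the remaining components are $\tau_v(2,1)=(2z,z^2)$; so $f_{\omega v}(0,2,1) = (a,s,p)$ for every $\omega \in \T$. In the non-royal case, $(s,p) \in b\Gamma_2 \setminus V(\G)$, so $(s,p) = (z_1+z_2, z_1 z_2)$ with distinct $z_1, z_2 \in \T$, and Theorem~\ref{thm1} produces $v = \eta B_\alpha \in Aut(\D)$ with $\tau_v(0,1) = (s,p)$. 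Then $f_{\omega v}(1,0,1)$ has last two coordinates $(s,p)$ and first coordinate $\omega \eta(1-|\alpha|^2)/(1+\overline{\alpha}^{\,2})$.

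The main obstacle is the last step: matching this first coordinate to the prescribed $a$. I would sidestep the explicit Blaschke computation by arguing that $f_{\omega v}(1,0,1)$ already lies in $b\Pbar$ by construction, so its first coordinate $a'$ and its second coordinate $s$ satisfy $|a'|^2 + |s|^2/4 = 1$; hence $|a'| = \sqrt{1-|s|^2/4} = |a|$, and letting $\omega$ range over $\T$ supplies the necessary argument adjustment so that $a'=a$. This completes the surjectivity onto $b\Pbar\setminus V(\Pe)$. A purely computational alternative, deriving $|a'|=|a|$ directly from the defining formula of $f_{\omega v}$ via $|1+\overline{\alpha}^{\,2}|^2 = (1-|\alpha|^2)^2 + 4(\mathrm{Re}\,\alpha)^2$, is possible but significantly more cumbersome.
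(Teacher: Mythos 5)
Your proposal is correct and takes essentially the same route as the paper: the paper likewise reduces the last two coordinates to Theorem \ref{thm1}, splits on whether $s_0^2=4p_0$, and uses exactly your trick of deducing $|\widetilde{a}|=|a_0|$ from the fact that $f_{\omega v}(1,0,1)$ already lies in $b\Pbar$ (rather than computing the modulus directly), then adjusting by $\omega\in\T$. The only cosmetic difference is that the paper justifies the $Aut(\Pe)$-invariance of $b\Pbar$ explicitly via peak points and peaking functions, a fact you invoke without proof.
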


\begin{proof}
It follows from Theorem 7.1 in \cite{Abouhajar} that every point of $b\Pbar$ is a peak point of $\Pbar$.  By definition, any peak point of $\Pbar$ must be in $b\Pbar$. Moreover, if $f$ is an automorphism of $\Pe$ and $x$ is a peak point of $\Pbar$ with a peaking function $h$, then $f(x)$ is also a peak point of $\Pbar$ with peaking function $h\circ f^{-1}$. Thus, $Aut(\Pe)$ preserves peak points of $\Pbar$ and consequently, $b\Pbar$. Hence, $f_{\omega v}(1, 0,1), f_{\omega v}(0, 2, 1)$ are in $b\Pbar$ for any $f_{\omega v} \in Aut(\Pe)$. Let $(a_0, s_0, p_0) \in b\Pbar$. If $s_0^2=4p_0$, then  $a_0=0$ since $|p_0|=1$ and $|a_0|^2+|s_0|^2\slash 4=1$. It follows from Theorem \ref{thm1} that there is an automorphism $v$ of $\D$ so that $\tau_{v}$ is in $Aut(\G)$ and $\tau_v(2, 1)=(s_0, p_0)$. Then for any $\omega \in \T$, the automorphism map $f_{\omega v}$ of $\Pe$ maps $(0, 2, 1)$ to $(0, \tau_v(2, 1))=(a_0, s_0, p_0)$. Thus, $
b\Pbar \cap V(\Pe)=\{f_{\omega v}(0, 2, 1) : \omega \in \T, v \in Aut(\D) \}$.

\medskip 

Let $s_0^2 \ne 4p_0$. Then again by Theorem \ref{thm1}, one can choose an automorphism $\tau_v$ of $\G$ for some $v=\eta B_\alpha \in Aut(\D)$ such that $\tau_v(0, 1)=(s_0, p_0)$. The map given by 
\[
f_{\omega v}(a, s, p)=\bigg(\frac{\omega \eta (1-|\alpha|^2)a}{1-\overline{\alpha}s+\overline{\alpha}^2p}, \tau_{v}(s, p) \bigg) \quad ( (a, s, p) \in \Pbar)
\]	
is in $Aut(\Pe)$, where $\omega \in \T$ is to be determined later. Evidently, $f_{\omega v}(1, 0, 1)=(\omega \widetilde{a}, s_0, p_0)$ where $\widetilde{a}=\eta (1-|\alpha|^2)(1+\overline{\alpha}^2)^{-1}$. From the discussion above, we have $(\omega \widetilde{a}, s_0, p_0) \in b\Pbar$ for $\omega \in \T$, in particular,$(\widetilde{a}, s_0, p_0) \in b \Pbar$. Therefore,  $|\widetilde{a}|^2+|s_0|^2\slash 4=1$ and so $|\widetilde{a}|=|a_0|$. Choose $\omega \in \T$ so that $a_0=\omega \widetilde{a}$. Thus, $f_{\omega v}(1, 0, 1)=(a_0, s_0, p_0)$. Thus, $b\Pbar \setminus V(\Pe)=\{f_{\omega v}(1, 0, 1) : \omega \in \T, v \in Aut(\D) \}$. The proof is now complete.
\end{proof}


\begin{thebibliography}{9}
		
		\bibitem{Abouhajar}
		A. A. Abouhajar, M. C. White and N. J. Young, \textit{A Schwarz lemma for a domain related to $\mu$-synthesis}, J. Geom. Anal., 17 (2007), 717 -- 750.\\
		
		\bibitem{AglerIV}	
		J. Agler, Z. A. Lykova and N. J. Young, \textit{The complex geometry of a domain related to $\mu$-synthesis}, J. Math. Anal. Appl., 422 (2015), 508 -- 543.\\		
		

		\bibitem{AglerII}	
		J. Agler and N. J. Young, \textit{A commutant lifting theorem for a domain in $\C^2$ and spectral interpolation}, J. Funct. Anal., 161 (1999), 452 -- 477.\\
		
		\bibitem{AglerIII}
		J. Agler and N. J. Young, \textit{The hyperbolic geometry of the symmetrized bidisc}, J. Geom. Anal., 14 (2004), 375 -- 403.\\
		

		
\bibitem{AglerV}
J. Agler and N. J. Young, \textit{The magic functions and automorphisms of a domain}, Complex Anal. Oper. Theory, 2 (2008), 383 -- 404.\\		
		
\bibitem{Cantor}
D. G. Cantor and R.  R. Phelps, \textit{An elementary interpolation theorem}, Proc. Amer. Math. Soc., 16 (1965), 523 -- 525.\\
		
				\bibitem{Doyle}
		J. Doyle, \textit{Analysis of feedback systems with structured uncertainties}, IEE Proc. Control Theory Appl., 129 (1982), 242 -- 250.\\

\bibitem{Edigarian}
A. Edigarian and W. Zwonek, \textit{Geometry of the symmetrized polydisc}, Arch. Math., 84 (2005), 364 -- 374.\\
		
\bibitem{Jarnicki}
		M. Jarnicki and P. Pflug, \textit{On automorphisms of the symmetrized bidisc}, Arch. Math. (Basel), 83 (2004), 264 -- 266.\\

\bibitem{Kosinski}
L. Kosi\'{n}ski, \textit{The group of automorphisms of the pentablock}, Complex Anal. Oper. Theory, 6 (2015), 1349 -- 1359.\\ 


\bibitem{KosinskiII}
L. Kosi\'{n}ski and W. Zwonek, \textit{Nevanlinna-Pick problem and uniqueness of left inverses in convex domains, symmetrized bidisc and tetrablock}, J. Geom. Anal., 26 (2016), 1863 -- 1890.\\ 

		\bibitem{Young}
		N. J. Young, \textit{The automorphism group of the tetrablock}, J. Lond. Math. Soc., 77 (2008), 757 -- 770.\\
		
	\end{thebibliography}
\end{document}